\numberwithin{equation}{section}
\let\bfseries=\undefined
\DeclareRobustCommand\bfseries
\newcommand{\Z}{{\mathbb Z}}
\newcommand{\R}{\mathbb R}
\DeclareMathOperator{\supp}{supp}
\DeclareMathOperator{\type}{type}
\def\Graver{{\mathcal G}}
\def\Circuits{{\mathcal C}}
\def\ve#1{\mathchoice{\mbox{\boldmath$\displaystyle\bf#1$}}
{\mbox{\boldmath$\textstyle\bf#1$}}
{\mbox{\boldmath$\scriptstyle\bf#1$}}
{\mbox{\boldmath$\scriptscriptstyle\bf#1$}}}
\newcommand\vecc{{\ve c}}
\newcommand\veg{{\ve g}}
\newcommand\vev{{\ve v}}
\newcommand\vex{{\ve x}}
\newcommand\vey{{\ve y}}
\newcommand{\maO}{O}            
\newcommand{\eoproof}{\hspace*{\fill} $\square$ \vspace{5pt}}
\newcommand{\DeclareBracket}[3]{
  \newcommand{#1}[2][]{%
  \ifthenelse%
  {\equal{##1}{}}%
  {\left#2##2\right#3}%
  {\csname ##1l\endcsname#2##2\csname ##1r\endcsname#3}}}
\DeclareBracket\bracket[]
\newenvironment{psmallmatrix}{\left(\smallmatrix}{\endsmallmatrix\right)}
\newcommand\FourBlockBig[5][\relax]{\begin{pmatrix}#2& #3\\#4&#5 \end{pmatrix}\ifx#1\relax\else^{(#1)}\fi}
\newcommand\FourBlock[5][\relax]{\begin{psmallmatrix}#2& #3\\#4&#5 \end{psmallmatrix}\ifx#1\relax\else{^{(#1)}}\fi}
\newcommand\TwoBlock[3][\relax]{[#2,#3]\ifx#1\relax\else{^{(#1)}}\fi}
\newcommand{\T}{{\intercal}} 
\begin{document}
\pagestyle{headings}  

\title{Lower bounds on the Graver complexity of $M$-fold matrices}
\author{Elisabeth Finhold \and Raymond Hemmecke}
\institute{Technische Universit\"at M\"unchen, Germany}

\date{\today}

\maketitle

\begin{abstract}
  In this paper, we present a construction that turns certain relations on Graver basis elements of an $M$-fold matrix $A^{(M)}$ into relations on Graver basis elements of an $(M+1)$-fold matrix $A^{(M+1)}$. In doing so, we strengthen the bound on the Graver complexity of the $M$-fold matrix $A_{3\times M}$ from $g(A_{3\times M})\geq 17\cdot 2^{M-3}-7$ (Berstein and Onn) to $g(A_{3\times M})\geq 24\cdot 2^{M-3}-21$, for $M\geq 4$. Moreover, we give a lower bound on the Graver complexity $g(A^{(M)})$ of general $M$-fold matrices $A^{(M)}$ and we prove that the bound for $g(A_{3\times M})$ is not tight.
\end{abstract}

\vspace{-0.5cm}

{\bf Keywords:} Graver basis, Graver complexity, lower bound

\vspace{-0.5cm}

\section{Introduction}

The Graver basis of an integer matrix $A\in\Z^{d\times n}$ is a finite set of vectors $\Graver(A)\subseteq\ker(A)\cap\Z^n$ that allows the representation of any element in $\ker(A)\cap\Z^n$ as a nonnegative sign-compatible integer linear combination. This representation property is the basis for the application of Graver bases as optimality certificates for the minimization of linear and separable convex functions over the lattice points in a polyhedron \cite{DeLoera+Hemmecke+Koeppe:book,Graver:75,Murota+Saito+Weismantel}. In fact, if the polyhedron is defined by an $N$-fold matrix
\[
  [A,B]^{(N)}:=\begin{pmatrix}
    B & B & \cdots & B \\
    A & \maO  &   & \maO  \\
    \maO  & A &   & \maO  \\
       &   & \ddots &   \\
    \maO  & \maO  &   & A
  \end{pmatrix}
\]
composed of fixed matrices $A$ and $B$, then this minization problem is solvable in strongly-polynomial time \cite{DeLoera+Hemmecke+Onn+Weismantel,DeLoera+Hemmecke+Lee:SteepestDescent}. One fundamental result used in the proof is the fact that the Graver basis of $[A,B]^{(N)}$ does not get arbitrarily complicated: Any vector $\vex=(\vex^1,\dots,\vex^N)^\T\in\ker([A,B]^{(N)})$ consists of $N$ bricks $\vex^i\in\ker(A)$. By $\type(\vex)$ we denote the number $|\set{\,i:\vex^i\neq\ve 0\,}|$ of nonzero bricks in $\vex$. In \cite{Hosten+Sullivant,Santos+Sturmfels} it was shown that there is a constant $g(A,B)$, the so-called \emph{Graver complexity} of $A$ and $B$, given by
\begin{equation}\label{Eq: Formula for Graver complexity}
  \max\set{\,\|\veg\|_1:\veg\in\Graver(B\Graver(A))\,},
\end{equation}
such that the type of any Graver basis element in $\Graver([A,B]^{(N)})$ is bounded by $g(A,B)$ for any $N\in\Z_+$. This readily implies that for fixed matrices $A$ and $B$, the Graver basis $\Graver([A,B]^{(N)})$ has only a polynomial number $O(N^{g(A,B)})$ of elements and is computable in time $O(N^{g(A,B)})$ which is polynomial in $N$. Although an explicit construction of the Graver complexity of two matrices is stated in Eq.~(\ref{Eq: Formula for Graver complexity}), the iterated Graver basis computation $\Graver(B\Graver(A))$ renders this formula practically useless. So it is not very surprising that little is known about these constants $g(A,B)$, even in the special case that $B$ is the identity matrix of appropriate size, where we abbriviate $g(A):=g(A,I_n)$.

Taking as $A$ the node/edge-incidence matrix of a graph, then $g(A)$ gives a graph related constant. It appears (as $A_{L\times M}$) for complete bipartite graphs in the study of $3$-way tables of sizes $L\times M\times N$. Using Eq.~(\ref{Eq: Formula for Graver complexity}), one quickly computes $g(A_{3\times 3})=9$. However, already the next case of $g(A_{3\times 4})$ was out of reach for a long time. Only recently, it was shown that the conjectured value $g(A_{3\times 4})=27$ is indeed correct \cite{Finhold+Hemmecke+Kahle}. In \cite{Berstein+Onn}, the authors proved the first exponential lower bound $g(A_{3\times M})\geq 17\cdot 2^{M-3}-7$, for $M\geq 4$. This bound has been extended to general complete bipartite graphs \cite{Kudo+Takemura}.

The Graver complexities of matrices corresponding to certain monomial curves have been studied in \cite{Hemmecke+Nairn,Nairn}. In particular, it was proved that $g\left(\left(\begin{smallmatrix}1&1&1&1\\0&a&b&a+b\\\end{smallmatrix}\right)\right)=2(a+b)/\gcd(a,b)$ for $a,b\in\Z_{>0}$ and that the Graver and Gr\"obner complexities (= maximum type of any element in the universal Gr\"obner basis of the toric ideal associated to $[A,B]^{(N)}$) agree whenever $A$ is unimodular and $B$ is chosen arbitrarily. Finally, in \cite{Bogart+Hemmecke+Petrovic} it was shown that the Graver and Gr\"obner complexities agree although both bases do not coincide.

In this paper, we present a construction that turns certain relations on Graver basis elements of $A^{(M)}$ into relations on Graver basis elements of $A^{(M+1)}$. Applying this construction to the $M$-fold matrix $A_{3\times M}$, we can strengthen the bound $g(A_{3\times M})\geq 17\cdot 2^{M-3}-7$ (Berstein and Onn \cite{Berstein+Onn}) to $g(A_{3\times M})\geq 24\cdot 2^{M-3}-21$, for $M\geq 4$, and give also a lower bound on the Graver complexity $g(A^{(M)})$ of general $M$-fold matrices $A^{(M)}$. In fact, we show that for any fixed integer $M_0\geq 6$, we can strengthen our bound to $g(A_{3\times M})\geq (28-\frac{224}{2^{M_0}})\cdot 2^{M-3}-(2^{M_0-1}-7)$ for $M\geq M_0$.

The paper is structured as follows. First we state and prove our main result, Theorem \ref{Thm: New general bound}, in Section \ref{Section: Main result}, then we present some corollaries of it in Section \ref{Section: Corollaries} and finally, in Section 4, we apply our construction to $A_{3\times M}$ and show that our bound $g(A_{3\times M})\geq 24\cdot 2^{M-3}-21$ is still not tight.

\section{Main result}\label{Section: Main result}

A linear combination $\sum\limits_{i=1}^k h_i\cdot \vex^i=\ve 0$ on some integer vectors $\vex^i$ is \emph{primitive} if the coefficients $h_i$ are relatively prime nonzero integers and no $k-1$ of the vectors $\vex^1,\ldots,\vex^k$ satisfy any nontrivial linear relation. It is well-known that there can only be one primitive relation on each set of vectors:
\begin{lemma}\label{lemmaPrimRelCoefficients}
  Let $\sum\limits_{i=1}^k h_i \cdot\vex^i=\ve 0$ be a primitive relation, $\sum\limits_{i=1}^k s_i \cdot\vex^i=\ve 0$ another linear relation. Then \(s_i=\alpha \cdot h_i\) for all \(i=1,\ldots,k\) for some \(\alpha \in \R\).
\end{lemma}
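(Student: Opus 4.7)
The plan is to eliminate one of the vectors $\vex^i$ from the two given relations by taking an appropriate linear combination, and then to invoke the defining property of primitivity which says that any proper subset of size $k-1$ of the $\vex^i$ carries no nontrivial linear relation.

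More concretely, fix an index $j\in\{1,\ldots,k\}$. I would multiply the primitive relation by $s_j$ and the second relation by $h_j$, and subtract:
\[
h_j\sum_{i=1}^k s_i\vex^i \;-\; s_j\sum_{i=1}^k h_i\vex^i \;=\; \sum_{i\neq j}(h_j s_i - s_j h_i)\vex^i \;=\; \ve 0.
\]
This is a linear relation on only $k-1$ of the vectors, namely on $\{\vex^i : i\neq j\}$. By the primitivity hypothesis, any such relation must be trivial, so $h_j s_i = s_j h_i$ for every $i\neq j$.

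Since the primitive relation has all nonzero coefficients, $h_j\neq 0$, and we may set $\alpha := s_j/h_j\in\R$. Then $s_i = \alpha h_i$ for every $i\neq j$, and the equation $s_j=\alpha h_j$ holds by the definition of $\alpha$. This gives the desired conclusion for all $i=1,\ldots,k$.

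The argument is essentially one line once the elimination step is set up, so there is no real obstacle; the only subtle point is to invoke the correct half of the primitivity definition, namely that it is independence of every $k-1$-subset (not just of the full set) that is needed to kill the residual relation. The other half of the definition (nonzero coefficients) enters only to guarantee $h_j\neq 0$ so that $\alpha$ is well-defined.
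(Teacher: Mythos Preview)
Your argument is correct. The paper does not actually supply a proof of this lemma; it is stated as well-known, so there is nothing to compare against. Your elimination step and appeal to the ``no nontrivial relation on any $k-1$ of the vectors'' clause of primitivity is exactly the standard justification, and your observation that $h_j\neq 0$ is needed to define $\alpha$ is accurate.
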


An immediate consequence of Eq.~(\ref{Eq: Formula for Graver complexity}) is the following.
\begin{lemma}\label{lemmaPrimRelGraver}
  Let $C\in\Z^{d\times n}$ and suppose that $\sum\limits_{i=1}^k h_i \cdot \vex^i= 0$ is a primitive relation on $\vex^i \in \Graver(C)$. Then the Graver complexity of $C$ satisfies $g(C) \geq \sum\limits_{i=1}^k |h_i|$.
\end{lemma}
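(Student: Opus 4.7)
The plan is to read off the coefficient vector $\veh=(h_1,\dots,h_k)$ of the primitive relation and exhibit it (after zero-padding) as an element of the Graver basis that witnesses $g(C)$ in formula (\ref{Eq: Formula for Graver complexity}). Concretely, let $M$ be the matrix whose columns are the finitely many elements of $\Graver(C)$, so that $g(C)=g(C,I_n)=\max\set{\|\veg\|_1:\veg\in\Graver(M)}$. Let $\tilde\veh$ be the integer vector indexed by $\Graver(C)$ that has entry $h_i$ in the column corresponding to $\vex^i$ (for $i=1,\dots,k$) and entry $0$ elsewhere. By hypothesis $M\tilde\veh=\sum_{i=1}^k h_i\vex^i=\ve 0$, so $\tilde\veh\in\ker(M)\cap\Z^{|\Graver(C)|}$.

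The heart of the argument is to show that $\tilde\veh$ lies in $\Graver(M)$, i.e.\ that it is $\sqsubseteq$-minimal among nonzero kernel elements. Suppose $\ves\sqsubseteq\tilde\veh$ with $\ves\in\ker(M)\cap\Z^{|\Graver(C)|}$ and $\ves\neq\ve 0$. Because $\supp(\ves)\subseteq\supp(\tilde\veh)\subseteq\{1,\dots,k\}$, the vector $\ves$ encodes a linear relation $\sum_{i=1}^k s_i\vex^i=\ve 0$. Lemma \ref{lemmaPrimRelCoefficients} then forces $s_i=\alpha h_i$ for some $\alpha\in\R$. Sign-compatibility and $|s_i|\leq|h_i|$ give $\alpha\in[0,1]$; since the $h_i$ are relatively prime integers and the $s_i=\alpha h_i$ are integers, $\alpha$ must itself be an integer, hence $\alpha\in\{0,1\}$. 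Ruling out $\alpha=0$ (as $\ves\neq\ve 0$) leaves $\ves=\tilde\veh$, so $\tilde\veh$ is $\sqsubseteq$-minimal and therefore belongs to $\Graver(M)$.

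Finally, applying formula (\ref{Eq: Formula for Graver complexity}) with $B=I_n$,
\[
  g(C)\;\geq\;\|\tilde\veh\|_1\;=\;\sum_{i=1}^k |h_i|,
\]
which is the claimed inequality. The only non-routine step is the minimality check in the second paragraph; everything else is bookkeeping about the definition of $g(C)$. The key ingredients there are precisely the two defining conditions of primitivity—irreducibility (handled by Lemma \ref{lemmaPrimRelCoefficients}) and coprimality of the coefficients (which pins $\alpha$ to an integer)—so no additional hypotheses beyond the lemma statement are needed.
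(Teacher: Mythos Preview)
Your proof is correct and follows exactly the route the paper intends: the paper offers no proof beyond the remark that the lemma ``is an immediate consequence of Eq.~(\ref{Eq: Formula for Graver complexity}),'' and your argument spells out precisely why---by exhibiting the (zero-padded) coefficient vector $\tilde\veh$ as an element of $\Graver(I_n\cdot\Graver(C))$ via the $\sqsubseteq$-minimality check. There is nothing to add.
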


In the following, we consider matrices $C:=A^{(M)}\in \Z^{(c+M\cdot r) \times (M\cdot c)}$ 
for $A\in \Z^{r\times c}$ and view the corresponding vectors as $M\times c$ tables
\[
  \vex=\left(\begin{array}{c} \vex_1\\ \vdots \\ \vex_M\end{array} \right)\in \R^{M\times c}, \vex_i\in \R^{c}.
\]

With these notions, we are ready to state and prove our main result.

\begin{theorem}\label{Thm: New general bound}
  Let $\sum\limits_{i=0}^k h_i \cdot \vex^i=\ve 0$ be a primitive relation on $\vex^i\in\Graver\left(A^{(M)}\right)$ for some $M \in \Z_+$ with $h_i\in \Z_+$ that satisfies the following conditions:
  \begin{itemize}
	\item[(a)] the element $\vex^0$ is of the form
	$(\vex^0)^\T=(\vex_1,\ldots,\vex_{g-1},\ve 0,\ldots,\ve 0,\vex_g)^\T$,
	  with $\type(\vex^0)=:g>2$ and $\sum\limits_{i=1}^g 1\cdot\vex_i=\ve 0$ a primitive relation,
	\item[(b)] the last brick of $\vex^1,\ldots,\vex^l$ ($l\in \Z_+$) equals $-\vex_g$, and
  	\item[(c)] $h_0+s=h_1+\ldots +h_l$ for some $s\in \Z$ with $\gcd(g-1,s)=1$.
  \end{itemize}
  Then there is a primitive relation $\sum\limits_{i=0}^{k+g-1} \bar{h}_i\cdot \vey^i=0$ on $k+g$   elements  $\vey^i \in \Graver\left(A^{(M+1)}\right)$ such that
  \[
    \sum_{i=0}^{k+g-1}\bar{h}_i= 
    \begin{cases} 
    (g-1)\cdot \sum\limits_{i=0}^k h_i+ (2g-3)s & \text{if } s\geq 0,\\ 
    (g-1)\cdot \sum\limits_{i=0}^k h_i- s& \text{if } s<0,
    \end{cases}
  \]
  giving a lower bound on $g\left(A^{(M+1)}\right)$.
\end{theorem}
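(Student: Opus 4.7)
The plan is to exhibit $k+g$ elements $\vey^0, \ldots, \vey^{k+g-1} \in \Graver\left(A^{(M+1)}\right)$ together with a primitive relation whose coefficient sum matches the claimed expression; the bound on $g\left(A^{(M+1)}\right)$ will then follow from Lemma \ref{lemmaPrimRelGraver}.

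For $i = 0, 1, \ldots, k$, I define $\vey^i$ as a lift of $\vex^i$ to $A^{(M+1)}$. The extra brick position is used to redistribute the shared tail $\pm \vex_g$ of $\vex^0, \vex^1, \ldots, \vex^l$, using the identity $\vex_1 + \cdots + \vex_g = \ve 0$. Conditions (a) and (b) guarantee that these local modifications can be made coherently across all the lifts. In parallel, I construct $g-1$ genuinely new elements $\vey^{k+1}, \ldots, \vey^{k+g-1}$, each a repacked variant of $\vex^0$ that exploits the extra brick to arrange the $\vex_j$'s in a new pattern. Their membership in $\Graver\left(A^{(M+1)}\right)$ follows from Lemma \ref{lemmaPrimRelCoefficients}: no proper subset of $\{\vex_1, \ldots, \vex_g\}$ sums to zero, so no $\vey^{k+j}$ admits a sign-compatible proper decomposition.

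Next, I determine integer coefficients $\bar h_i$ making $\sum \bar h_i \vey^i = \ve 0$. Those on the lifts $\vey^i$ are taken proportional to $(g-1)h_i$, giving the leading contribution $(g-1)\sum_{i=0}^k h_i$ to $\sum |\bar h_i|$; those on the new elements $\vey^{k+j}$ are chosen according to the sign of $s$ and account for the remaining $(2g-3)s$ or $-s$. That the linear combination vanishes is verified brick by brick: interior bricks collapse to the original relation $\sum h_i \vex^i = \ve 0$, while boundary bricks are handled by condition (c), $h_0 + s = h_1 + \cdots + h_l$.

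The main obstacle is verifying primitivity. Coprimality of the $\bar h_i$ reduces by construction to $\gcd(g-1, s) = 1$, which is exactly the assumption in (c). That no proper $(k+g-1)$-subset of the $\vey^i$'s satisfies a nontrivial linear relation follows from Lemma \ref{lemmaPrimRelCoefficients} applied to the two primitive relations in the hypothesis: any hypothetical sub-relation on $\{\vey^i\}$ projects to a nontrivial relation on either the $\vex^i$'s or the $\vex_j$'s, contradicting the primitivity of $\sum h_i \vex^i = \ve 0$ or of $\sum_{j=1}^g \vex_j = \ve 0$. Summing $|\bar h_i|$ separately in the cases $s \geq 0$ and $s < 0$ then yields the stated formula, and Lemma \ref{lemmaPrimRelGraver} completes the proof.
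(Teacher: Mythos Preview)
Your outline follows essentially the same route as the paper's proof: lift $\vex^0,\ldots,\vex^l$ by shifting the last brick into the new slot, embed $\vex^{l+1},\ldots,\vex^k$ with a trailing zero brick, add $g-1$ brick-permuted copies of $\pm\vex^0$, and then argue primitivity by projecting an arbitrary relation on the $\vey^i$ back onto the two given primitive relations via Lemma~\ref{lemmaPrimRelCoefficients}. One small imprecision: the coefficients on the lifts are not all $(g-1)h_i$---in the actual construction $\bar h_0=(g-1)h_0+(g-2)s$ while $\bar h_i=(g-1)h_i$ for $i\geq 1$ and $\bar h_{k+j}=|s|$---so the split into ``leading $(g-1)\sum h_i$'' plus ``remainder from the new elements'' is not quite how the bookkeeping works out, but the strategy and the final sum are as you describe.
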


\begin{proof}
We first state the $k+g$ Graver basis elements $\vey^i\in\Graver\left(A^{(M+1)}\right)$. For this, let us denote by $(\vex^i)_j$ the $j$-th brick of $\vex^i$. We get the vectors $\vey^0,\ldots,\vey^l$ from $\vex^0,\ldots,\vex^l$ by moving the $M$-th row $(\vex^i)_M$ to the new brick $M+1$ and setting the $M$-th row to zero:
\[
  \vey^0:=\begin{pmatrix} (\vex^0)_1 \\ \vdots \\ (\vex^0)_{M-1} \\ \ve 0  \\ \hline(\vex^0)_M\end{pmatrix}
	=\begin{pmatrix} \vex_1 \\ \vex_2\\ \vdots \\ \vex_{g-1}\\ \ve 0\\ \vdots \\ \ve 0 \\\hline \vex_g\end{pmatrix}\qquad\text{and}\qquad  \vey^i:=\begin{pmatrix} (\vex^i)_1 \\ \vdots \\ (\vex^i)_{M-1} \\ \ve 0  \\ \hline(\vex^i)_M\end{pmatrix}
	=\begin{pmatrix} (\vex^i)_1 \\ \vdots \\ (\vex^i)_{M-1} \\  \ve 0  \\ \hline -\vex_g\end{pmatrix},
\]
for $i=1,\ldots,l$. 

For $i=l+1,\ldots,k$ the elements $\vey^{i}$ are defined by just appending a zero-brick to the vectors $\vex^i \in \Graver\left(A^{(M)}\right)$ to turn them into elements $\vey^i \in \Graver\left(A^{(M+1)}\right)$:
\[
  \vey^i:=\begin{pmatrix} (\vex^i)_1 \\ \vdots \\ (\vex^i)_M \\ \hline \ve 0\end{pmatrix}\qquad  i=l+1,\ldots,k.
\]
In order to define the $g-1$ new Graver basis elements $\vey^{k+1},\ldots, \vey^{k+g-1}$ we have to distinguish between the two cases $s\geq 0$ and $s<0$. For $s\geq 0$, we set
\[
  \vey^{k+1}:=\begin{pmatrix} \ve 0 \\ -\vex_2 \\ -\vex_3 \\ \vdots \\ -\vex_{g-1} \\ \ve 0 \\ \vdots \\ \ve 0 \\ -\vex_g \\ \hline -\vex_1\end{pmatrix}, 
  \vey^{k+2}:=\begin{pmatrix}-\vex_1 \\ \ve 0 \\ -\vex_3 \\ \vdots \\ -\vex_{g-1} \\ \ve 0 \\ \vdots \\ \ve 0 \\ -\vex_g \\ \hline -\vex_2\end{pmatrix}, \ldots,
  \vey^{k+g-1}:=\begin{pmatrix} - \vex_1 \\ -\vex_2 \\ \vdots \\ -\vex_{g-2}\\ \ve 0 \\ \ve 0 \\ \vdots \\ \ve 0 \\ -\vex_g \\ \hline -\vex_{g-1}\end{pmatrix}, 
\]
whereas for $s<0$ we change signs and set
\[
  \vey^{k+1}:=\begin{pmatrix} \ve 0 \\ \vex_2 \\  \vex_3 \\ \vdots \\ \vex_{g-1} \\ \ve 0 \\ \vdots \\ \ve 0 \\  \vex_g \\ \hline \vex_1\end{pmatrix}, 
  \vey^{k+2}:=\begin{pmatrix} \vex_1 \\ \ve 0 \\  \vex_3 \\ \vdots \\  \vex_{g-1} \\ \ve 0 \\ \vdots \\ \ve 0 \\  \vex_g \\ \hline \vex_2\end{pmatrix}, \ldots,
  \vey^{k+g-1}:=\begin{pmatrix} \vex_1 \\ \vex_2 \\ \vdots \\  \vex_{g-2}\\ \ve 0 \\ \ve 0 \\ \vdots \\ \ve 0 \\ \vex_g \\ \hline  \vex_{g-1}\end{pmatrix}.
\]

We claim that
\[
  \sum_{i=0}^{k+g-1}\bar{h}_i\cdot\vey^i:= [(g-1) h_0+(g-2) s]\cdot\vey^0+\sum_{i=1}^{k}(g-1) h_i \cdot \vey^i+\sum_{i=k+1}^{k+g-1}  |s| \cdot\vey^i=\ve 0
\]
is a primitive relation on the elements $\vey^0,\ldots,\vey^{k+g-1}$. (It can easily be verified that these elements indeed sum up to zero given the specified coefficients/weights.)

Let $\sum\limits_{i=0}^{k+g-1} \alpha_i \cdot\vey^i=\ve 0$ be any relation on the $\vey^i$. We show that the coefficients $\alpha_i$ are uniquely determined up to some factor $\alpha$. This implies that the relation above is indeed primitive. Looking at the $(M+1)$-th bricks, we conclude that
\begin{align*}
	  \ve 0= \sum\limits_{i=0}^{k+g-1} \alpha_i \cdot(\vey^i)_{M+1}
		=\begin{cases}
						\alpha_0\cdot\vex_g+\sum\limits_{i=1}^{l} \alpha_i \cdot (-\vex_g)+\sum\limits_{i=k+1}^{k+g-1} \alpha_i\cdot (- \vex_{i-k})\\
	\qquad =(\alpha_0-\sum\limits_{i=1}^l \alpha_i)\cdot\vex_g+\sum\limits_{i=1}^{g-1} (-\alpha_{k+i})\cdot\vex_i & \text{if } s\geq 0,\\
				\alpha_0\cdot\vex_g+\sum\limits_{i=1}^{l} \alpha_i \cdot (-\vex_g)+\sum\limits_{i=k+1}^{k+g-1} \alpha_i\cdot  \vex_{i-k}\\
	\qquad =(\alpha_0-\sum\limits_{i=1}^l \alpha_i)\cdot\vex_g+\sum\limits_{i=1}^{g-1} \alpha_{k+i}\cdot\vex_i & \text{if } s< 0.
		\end{cases}
\end{align*}
As the relation $\sum\limits_{i=1}^{g} 1\cdot\vex_i$ is primitive, Lemma  \ref{lemmaPrimRelCoefficients} implies
$ -\alpha_{k+1}=\cdots=-\alpha_{k+g-1}=\alpha_0-\sum\limits_{i=1}^l \alpha_i$
if $s \geq 0$,  and  $\alpha_{k+1}=\cdots=\alpha_{k+g-1}=\alpha_0-\sum\limits_{i=1}^l \alpha_i$ if $s<0$. With 
\begin{align}
	t:=\alpha_{k+1}=\ldots=\alpha_{k+g-1}
	=\begin{cases} -\alpha_0+ \alpha_1+\ldots+\alpha_l& \text{if } s\geq 0, \\
	\alpha_0- \alpha_1-\ldots-\alpha_l& \text{if } s< 0,\end{cases}\label{eq3}
\end{align}
we get
\[
	 \ve 0= \sum\limits_{i=0}^{k+g-1} \alpha_i \cdot\vey^i=\sum\limits_{i=0}^{k} \alpha_i \cdot\vey^i+t\cdot \sum_{i=k+1}^{k+g-1} \vey^i.
\]
We can turn this relation on elements of type $M+1$ into a relation on elements of type $M$ by replacing the last two bricks of each vector by their sum (= a single brick). Observe that by doing this, the vectors $\vey^i$, $i=0,\ldots,k$, just turn into the corresponding $\vex^i$, whereas the sum $\vey^{k+1}+\ldots+\vey^{k+g-1}$ by definition of the new vectors $\vey^{k+1},\ldots,\vey^{k+g-1}$ becomes $-(g-2)\cdot \vex^0$ if $s\geq 0$, and $(g-2)\cdot \vex^0$ in case $s< 0$. That is why $\sum\limits_{i=0}^{k+g-1} \alpha_i \cdot \vey^i=\ve 0$ implies that
\[
	\ve 0 = \begin{cases}
	\sum\limits_{i=0}^{k} \alpha_i \cdot\vex^i- t (g-2)\cdot\vex^0 =[\alpha_0- (g-2)t] \cdot \vex^0+\sum\limits_{i=1}^k \alpha_i \cdot\vex^i & \text{if } s\geq 0,\\
	\sum\limits_{i=0}^{k} \alpha_i \cdot\vex^i+ t (g-2)\cdot\vex^0 =[\alpha_0+ (g-2)t] \cdot \vex^0+\sum\limits_{i=1}^k \alpha_i \cdot \vex^i & \text{if } s<0.	
	\end{cases}
\]
But $\sum\limits_{i=0}^{k} h_i\cdot \vex^i=0$ is primitive by assumption, and so again by Lemma \ref{lemmaPrimRelCoefficients} it must hold that 
\begin{align}
	\alpha\cdot h_0&=\begin{cases}
		\alpha_0-(g-2) t & \text{if } s\geq 0,\\
		\alpha_0+(g-2) t& \text{if } s<0,	
	\end{cases}	\label{eq1} \\
	 \alpha \cdot h_i &=\alpha_i,\hfill &i=1,\ldots,k  	\label{eq4} 
\end{align}
for some $\alpha$.
Therefore, by using assumption (c), we have that 
\begin{equation}
	\alpha\cdot(h_0+s)\stackrel{\text{(c)}}{=}\alpha\cdot (h_1+\ldots+h_l)	\stackrel{(\ref{eq4})}{=}\alpha_1+\ldots+\alpha_l\stackrel{(\ref{eq3})}{=}
	\begin{cases} \alpha_0+ t & \text{if } s\geq 0,\\ \alpha_0-t & \text{if } s< 0.\end{cases} \label{eq2}
\end{equation}
Subtracting equations (\ref{eq1}) and (\ref{eq2}) gives 
\[
	(g-1) t = \begin{cases}
	 \alpha\cdot(h_0+s)- \alpha \cdot h_0 & \text{if } s\geq 0,\\
	- \alpha\cdot(h_0+s)+ \alpha \cdot h_0 & \text{if } s< 0,
	\end{cases}
\]
and therefore
\[
	t	= \alpha\cdot\frac{|s|}{g-1}.
\] 
Plugging in this value for $t$ in (\ref{eq1}), (\ref{eq4}) and (\ref{eq3}), we get that in any relation $\sum\limits_{i=0}^{k+g-1} \alpha_i\cdot \vey^i=\ve 0$ the coefficients have to satisfy
\begin{align*}
	\alpha_0&=\alpha\cdot\left(h_0+ \frac{g-2}{g-1}s\right),\\
	\alpha_i&=\alpha\cdot h_i  \hfill &i=1,\ldots,k,\\
	\alpha_i&= \alpha \cdot\frac{|s|}{g-1} \hfill &i=k+1,\ldots,k+g-1.
\end{align*}
Thus any nontrivial relation must involve all $\vey^i$ and therefore our relation $\sum\limits_{i=0}^{k+g-1} \bar{h}_i\cdot \vey^i=\ve 0$ is support minimal. 

Such a support minimal relation is primitive if and only if all coefficients are nonzero integers and do not have a common factor, which is the case for
\begin{align*}
	\bar{h}_0&=(g-1)h_0+ (g-2)s,\\
	\bar{h}_i&=(g-1) h_i, \hfill &i=& 1,\ldots,k,\\
	\bar{h}_i&= |s|,  \hfill &i=& k+1,\ldots,k+g-1\;,
\end{align*}
since any common factor $u$ of $\bar{h}_0,\ldots,\bar{h}_{k+g-1}$ clearly divides $s$. From $\gcd(g-1,s)=1$, we conclude that $u$ is a divisor of $h_1,\ldots,h_k$ and also of $h_0$. As $h_0,\ldots,h_k$ are co-prime (they are used in the primitive relation $\sum\limits_{i=0}^{k} {h}_i\cdot \vex^i=\ve 0$ by assumption), we must have $u=1$. Finally we can compute  
\begin{align*}
  \sum_{i=0}^{k+g-1}\bar{h}_i&= ((g-1) h_0+(g-2) s) +\sum_{i=1}^k(g-1)h_i + \sum_{i=k+1}^{k+g-1} |s| \\ &=
{\small
	\begin{cases} 
		(g-1)\sum\limits_{i=0}^k h_i + (g-2)s+(g-1)s=(g-1)\sum\limits_{i=0}^k h_i +(2g-3)s	& \text{if } s\geq 0,\\
		(g-1)\sum\limits_{i=0}^k h_i + (g-2)s - (g-1)s= (g-1)\sum\limits_{i=0}^k h_i -s 			& \text{if } s<0. 	
	\end{cases}
}
\end{align*}
\eoproof
\end{proof}

\section{Corollaries}\label{Section: Corollaries}

The following corollary shows that Theorem \ref{Thm: New general bound} can be applied recursively, which immediately leads to a lower bound for $g\left(A^{(M)}\right)$.
\begin{corollary}\label{Cor: Recursion}
Let $\sum\limits_{i=0}^k h_i \cdot \vex^i=\ve 0$ be a primitive relation on $\vex^i\in\Graver\left(A^{(M_0)}\right)$ for some $M_0\in\Z_+$, with  $s$, $g=\type(\vex^0)$, satisfying the conditions of Theorem \ref{Thm: New general bound}. 

Then for $M\in \Z, M\geq M_0$, the Graver complexity $g\left(A^{(M)}\right)$ of the $M$-fold matrix $A^{(M)}$ is bounded from below by
\[
	g\left(A^{(M)}\right)\geq 
	\begin{cases}
		(g-1)^{M-M_0}\cdot\left(\sum\limits_{i=0}^k h_i +\frac{(2g-3)s}{g-2} \right) -\frac{(2g-3)s}{g-2} & \text{if } s\geq 0, \\
		(g-1)^{M-M_0}\cdot\left(\sum\limits_{i=0}^k h_i +\frac{-s}{g-2} \right) +\frac{s}{g-2} & \text{if } s < 0. \\
	\end{cases}
\]
\end{corollary}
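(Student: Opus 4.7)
The plan is to iterate Theorem~\ref{Thm: New general bound} a total of $M-M_0$ times, obtaining a sequence of primitive relations at levels $M_0, M_0+1, \ldots, M$, and then to solve the resulting first-order linear recurrence on the sum of coefficients. Lemma~\ref{lemmaPrimRelGraver} then yields the bound on $g\!\left(A^{(M)}\right)$.

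The central step is to verify that the output of Theorem~\ref{Thm: New general bound} satisfies its own hypotheses (a), (b), (c) again, with the \emph{same} values of $g$ and $s$, so that the theorem can be reapplied. For (a), the new leader $\vey^0=(\vex_1,\ldots,\vex_{g-1},\ve 0,\ldots,\ve 0,\vex_g)^\T$ has type $g$ and retains the primitive relation $\sum_{i=1}^g \vex_i=\ve 0$ on the same nonzero bricks. For (b), the new last brick (at position $M+1$) of $\vey^1,\ldots,\vey^l$ equals $-\vex_g$ by construction, so the same $l$ may be reused; the added vectors $\vey^{k+1},\ldots,\vey^{k+g-1}$ have last brick $\mp\vex_i$ for $i=1,\ldots,g-1$, none of which equals $-\vex_g$, so they do not interfere with condition (b). For (c), using $h_0+s=h_1+\ldots+h_l$ at the previous level, one directly computes
\[
  \bar{h}_1+\ldots+\bar{h}_l=(g-1)(h_1+\ldots+h_l)=(g-1)(h_0+s)=\bar{h}_0+s,
\]
so the same value $\bar{s}=s$ works, and the coprimality condition $\gcd(g-1,s)=1$ is preserved.

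With the hypotheses preserved under each application, let $S_j$ denote the sum of coefficients of the primitive relation obtained at level $j$, so $S_{M_0}=\sum_{i=0}^k h_i$. Theorem~\ref{Thm: New general bound} yields the recurrence $S_{j+1}=(g-1)\,S_j+c$, with $c=(2g-3)s$ if $s\geq 0$ and $c=-s$ if $s<0$. Since $g>2$ the factor $g-2$ is nonzero, and solving this standard linear recurrence gives
\[
  S_M=(g-1)^{M-M_0}\!\left(S_{M_0}+\frac{c}{g-2}\right)-\frac{c}{g-2},
\]
which matches the stated formula in both sign cases. Finally, Lemma~\ref{lemmaPrimRelGraver} applied to the primitive relation at level $M$ yields $g\!\left(A^{(M)}\right)\geq S_M$.

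The only real subtlety is the invariance step: one must confirm not just that a primitive relation of the claimed coefficient sum exists at level $M+1$, but also that the particular relation produced by Theorem~\ref{Thm: New general bound} again meets conditions (a)--(c) with unchanged parameters $g$ and $s$, so that the theorem can be chained. Once this invariance is in place, the rest is a routine first-order linear recurrence.
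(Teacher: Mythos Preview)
Your proposal is correct and follows essentially the same approach as the paper: verify that the output of Theorem~\ref{Thm: New general bound} again satisfies hypotheses (a)--(c) with the same $g$ and $s$, then iterate. You are merely more explicit than the paper in writing out and solving the linear recurrence $S_{j+1}=(g-1)S_j+c$, whereas the paper simply says ``by induction we obtain the formula''; your additional remark that the new vectors $\vey^{k+1},\ldots,\vey^{k+g-1}$ do not interfere with condition~(b) is harmless but not strictly needed, since condition~(b) only requires that the first $l$ vectors have last brick $-\vex_g$, not that no others do.
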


\begin{proof}
Observe that the new relation $\sum\limits_{i=0}^{k+g-1}\bar{h}_i\cdot\vey^i=\ve 0$ defined in the proof of Theorem \ref{Thm: New general bound} again satisfies the conditions of Theorem \ref{Thm: New general bound}, i.e. 
\begin{itemize}
	\item[(a)] $(\vey^0)^\T=(\vex_1,\ldots,\vex_{g-1},\ve 0,\ldots,\ve 0,\ve 0,\vex_g)^\T$,
	\item[(b)] row $M+1$ of $\vey^1,\ldots,\vey^l$ equals $-\vex_g$, and 	 
	\item[(c)] $\bar{h}_0+s=(g-1)h_0+(g-2)s+s= (g-1)(h_0+s)=(g-1)(h_1+\ldots,h_l)=\bar{h}_1+\ldots+\bar{h}_l$.
\end{itemize}
Hence, we can apply Theorem \ref{Thm: New general bound} recursively and by induction we obtain the formula stated as a lower bound of $A^{(M)}$. 
\eoproof
\end{proof}

Applying this corollary to a more concrete base case $\sum\limits_{i=0}^{k}{h}_i\cdot\vex^i=\ve 0$, we obtain the following bound.
\begin{corollary}\label{Cor: General bound}
Let $g:= \max\set{\,|\supp(\vecc)|:\vecc\in\Circuits(\Graver(A))\,}\geq 3$. Then for $M\in \Z$,  $M\geq g$, the Graver complexity $g\left(A^{(M)}\right)$ of the $M$-fold matrix $A^{(M)}$ is bounded from below by
\[
	g\left(A^{(M)}\right)\geq \frac{g-1}{g-2}\cdot (g-1)^{M-(g-1)}-\frac{1}{g-2}.
\]
\end{corollary}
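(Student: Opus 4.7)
The plan is to construct a specific base case in $\Graver(A^{(g)})$ to which Corollary~\ref{Cor: Recursion} can be applied with parameters $M_0 = g$, $\sum h_i = g$ and $s = -1$; a brief algebraic simplification using $(g-1)^2 - 1 = g(g-2)$ then recovers the stated bound. By hypothesis there is a circuit $\vecc \in \Circuits(\Graver(A))$ of support $g$, yielding a primitive relation $\sum_{i=1}^g c_i\veg_i = \ve 0$ on $\veg_1,\ldots,\veg_g \in \Graver(A)$ with coprime nonzero integer coefficients $c_i$. Setting $\vex_i := c_i\veg_i$, the rescaled relation $\sum_{i=1}^g 1\cdot\vex_i = \ve 0$ remains primitive, since the minimality of the circuit's support makes any $g-1$ of the $\vex_i$ linearly independent.

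I would define $\vex^0 := (\vex_1,\ldots,\vex_g)^\T$ and, for $i=1,\ldots,g-1$, let $\vex^i$ be the $i$-fold cyclic brick-shift of $\vex^0$. The key technical step is verifying $\vex^0 \in \Graver(A^{(g)})$: in any sign-compatible decomposition $\vex^0 = \vey + \vez$ each brick $\vey_i$ lies in $\ker(A)$ and is $\sqsubseteq$-below $c_i\veg_i$, so the one-dimensionality of $\ker(A)$ restricted to $\supp(\veg_i)$ (a standard consequence of $\veg_i$ being Graver, hence primitive and supported on a circuit of the restricted matrix) forces $\vey_i = k_i\veg_i$ with $k_i \in \Z$ of the same sign as $c_i$ and $|k_i| \leq |c_i|$. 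The kernel constraint $\sum k_i\veg_i = \ve 0$ combined with Lemma~\ref{lemmaPrimRelCoefficients} applied to the primitive relation $\sum c_i\veg_i = \ve 0$ forces $(k_1,\ldots,k_g) = \alpha(c_1,\ldots,c_g)$ for some $\alpha$; coprimality of the $c_i$ and the bounds on $k_i$ pin $\alpha \in \{0,1\}$, so $\vey \in \{\ve 0,\vex^0\}$. Brick-permutation symmetry of $A^{(g)}$ then gives $\vex^i \in \Graver(A^{(g)})$ for every $i$.

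A direct brick-wise computation shows $\sum_{i=0}^{g-1}\vex^i = \ve 0$, and this relation is primitive: a hypothetical $\sum\alpha_i\vex^i = \ve 0$ reduces at each brick $j$ to $\sum_k \alpha_{j-k\bmod g}\vex_k = \ve 0$, and primitivity of $\sum\vex_k = \ve 0$ (via Lemma~\ref{lemmaPrimRelCoefficients}) forces $\alpha_i$ to be independent of $i$. Taking $M_0 = g$, $h_0 = \cdots = h_{g-1} = 1$, $l = 0$ and $s = -1$, condition (a) of Theorem~\ref{Thm: New general bound} holds since $\type(\vex^0) = g \geq 3$, condition (b) is vacuous for $l = 0$, and (c) reduces to $h_0 + s = 0$ with $\gcd(g-1,-1) = 1$. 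Corollary~\ref{Cor: Recursion} applied with $\sum h_i = g$ and $s = -1$ then yields
\[
g(A^{(M)}) \geq (g-1)^{M-g}\Bigl(g + \tfrac{1}{g-2}\Bigr) - \tfrac{1}{g-2} = \frac{(g-1)^{M-g+2} - 1}{g-2} = \frac{g-1}{g-2}(g-1)^{M-(g-1)} - \frac{1}{g-2}.
\]
The main obstacle is the Graver-minimality argument for $\vex^0$; once that structural fact about sign-compatible subvectors of $c_i\veg_i$ is in hand, everything else is bookkeeping.
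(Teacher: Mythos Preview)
Your construction and choice of parameters ($M_0=g$, cyclic shifts, $h_i\equiv 1$, $l=0$, $s=-1$) are exactly those of the paper, and the algebra at the end is correct. The paper, however, simply \emph{asserts} $\vex^0=(c_1\vev_1,\ldots,c_g\vev_g)\in\Graver(A^{(g)})$ as a known fact, implicitly relying on the $N$-fold structure theorem of \cite{Santos+Sturmfels,Hosten+Sullivant}: a circuit of the column matrix $\Graver(A)$ is in particular a Graver element of that matrix, and such elements lift to Graver elements of $A^{(g)}$.

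Your attempt to prove this membership directly has a genuine error at precisely the spot you flagged as the main obstacle. You assert that for $\veg_i\in\Graver(A)$ the kernel $\ker(A)$ restricted to $\supp(\veg_i)$ is one-dimensional, calling it ``a standard consequence of $\veg_i$ being Graver, hence primitive and supported on a circuit of the restricted matrix.'' This conflates $\sqsubseteq$-minimality (Graver) with support-minimality (circuit); the two notions do not coincide. For $A=(1\ 2\ 3\ 4)$ the vector $\veg=(1,1,-1,0)$ lies in $\Graver(A)$, yet $\ker(A)$ restricted to $\{1,2,3\}$ is two-dimensional. Concretely, with $c_i=3$ one has $(3,0,-1,0)\in\ker(A)$ and $(3,0,-1,0)\sqsubseteq 3\veg=(3,3,-3,0)$, but $(3,0,-1,0)$ is not an integer multiple of $\veg$. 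So the step ``$\vey_i\sqsubseteq c_i\veg_i$ forces $\vey_i=k_i\veg_i$'' fails, and the rest of your minimality argument does not go through. The conclusion $\vex^0\in\Graver(A^{(g)})$ is still true, but it requires the cited structure theorem rather than the one-dimensionality claim.
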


\begin{proof} By definition of $g$ there is a primitive relation $\sum\limits_{i=1}^gc_i\cdot \vev_i$ with $\vev_i\in\Graver(A)$. With this, we define $\vex^0:=\left(\begin{array}{c}\vex_1 \\ \vdots \\ \vex_g\end{array}\right):=\left(\begin{array}{c}c_1\vev_1 \\ \vdots \\ c_g\vev_g\end{array}\right)\in \Graver\left(A^{(g)} \right)$ with $\type(\vex^0)=g$ and $\sum\limits_{i=1}^{g} 1\cdot \vex_i$ being a primitive relation. Thus 
\[
	1\cdot \left(\begin{array}{c}\vex_1 \\ \vex_2\\ \vex_3\\  \vdots \\ \vex_{g-1}\\ \vex_g\end{array}\right) 
	+1\cdot\left(\begin{array}{c}\vex_2 \\ \vex_3\\ \vex_4\\  \vdots \\ \vex_g \\ \vex_1 \end{array}\right)
	+1\cdot \left(\begin{array}{c}\vex_3 \\ \vex_4\\ \vex_5\\  \vdots \\ \vex_1 \\ \vex_2 \end{array}\right)
\ldots
	+1\cdot \left(\begin{array}{c}\vex_g \\ \vex_1\\ \vex_2\\  \vdots \\ \vex_{g-2} \\ \vex_{g-1}\end{array}\right)=0
\]
is a primitive relation on $g$ elements $\vex^i\in \Graver\left(A^{(g)}\right), i=0,\ldots,g-1$. We choose 
$M_0=g$, $l=0$ and $s=-h_0=-1$. 
Then  we apply Corollary \ref{Cor: Recursion} and get
\[
	g\left(A^{(M)}\right)\geq  (g-1)^{M-g}\left(\sum\limits_{i=0}^{g-1} 1 +\frac{1}{g-2}\right)-\frac{1}{g-2}=\frac{g-1}{g-2}\cdot (g-1)^{M-(g-1)}-\frac{1}{g-2}.
\]
\eoproof
\end{proof}

Note that for $A_{3\times M}$ we have $g=3$ which gives a bound $g\left(A_{3\times M}\right)\geq  4\cdot 2^{M-3}-1$, which already shows the known exponential behavior of the lower bound. However, choosing a better primitive relation, we can get a much better bound.

\begin{corollary}\label{Cor: A3M bound} 
\[
	g\left(A_{3\times M}\right)\geq  24\cdot 2^{M-3}-21, \qquad\text{ for } M\geq 4.
\]
\end{corollary}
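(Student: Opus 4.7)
The plan is to identify $A_{3\times M}$ as $A^{(M)}$ in the notation of Section \ref{Section: Main result}, with $A = (1,1,1) \in \Z^{1 \times 3}$: with this choice, $A^{(M)}$ is precisely the $(3+M)\times 3M$ incidence matrix of the complete bipartite graph $K_{3,M}$. Then Corollary \ref{Cor: Recursion} applies directly, yielding lower bounds on $g(A_{3\times M})$ from a suitable base primitive relation on $\Graver(A_{3\times M_0})$ for some small $M_0$.

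To match the target $24\cdot 2^{M-3}-21$, I first reverse-engineer the parameters. Taking $M_0=4$, $g=3$ and $s\geq 0$ in the formula of Corollary \ref{Cor: Recursion} gives $2^{M-4}\bigl(\sum_{i=0}^k h_i+3s\bigr)-3s$. Equating this with $48\cdot 2^{M-4}-21$ forces $s=7$ and $\sum_{i=0}^k h_i=27$. The gcd condition $\gcd(g-1,s)=\gcd(2,7)=1$ from (c) is automatic, and the required sum $27$ is precisely $g(A_{3\times 4})$ established in \cite{Finhold+Hemmecke+Kahle}, strongly suggesting that the base case can indeed be realized.

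The construction of the base case primitive relation in $\Graver(A_{3\times 4})$ is the heart of the proof. For condition (a), I would take $\vex^0$ to be the $4\times 3$ Graver table with rows $(1,-1,0),(0,1,-1),(0,0,0),(-1,0,1)$; this has type $g=3$, with bricks $\vex_1,\vex_2,\vex_3$ satisfying the primitive relation $\vex_1+\vex_2+\vex_3=\ve 0$ on $\Graver((1,1,1))$. The harder task is to supplement $\vex^0$ with further elements $\vex^1,\ldots,\vex^k\in\Graver(A_{3\times 4})$ and positive coefficients $h_0,\ldots,h_k$ so that (b) the last brick of each of $\vex^1,\ldots,\vex^l$ equals $-\vex_3=(1,0,-1)$, (c) $h_0+7=h_1+\cdots+h_l$, the overall sum $\sum h_i=27$, and the whole relation $\sum h_i\vex^i=\ve 0$ is primitive. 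I would look at the explicit primitive relation from \cite{Finhold+Hemmecke+Kahle} witnessing $g(A_{3\times 4})=27$ and, after possibly permuting rows or columns, verify that it has the required shape; otherwise I would construct the companion elements directly from small known $4\times 3$ Graver tables whose last row is $-\vex_3$.

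With such a base case in hand, plugging $g=3$, $s=7$, $M_0=4$, $\sum h_i=27$ into Corollary \ref{Cor: Recursion} gives
\[
g(A_{3\times M})\;\geq\;2^{M-4}(27+21)-21\;=\;48\cdot 2^{M-4}-21\;=\;24\cdot 2^{M-3}-21
\]
for every $M\geq 4$, as required. The principal obstacle is precisely the base case: exhibiting a primitive relation on $\Graver(A_{3\times 4})$ whose coefficient sum is exactly $27$ and which simultaneously meets the rather rigid structural conditions (a)--(c), in particular having $l$ companion elements whose last brick equals $-\vex_3$ with coefficients arranged so that $h_0+7=h_1+\cdots+h_l$.
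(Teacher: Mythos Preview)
Your proposal is correct and follows exactly the same route as the paper: apply Corollary~\ref{Cor: Recursion} with $M_0=4$, $g=3$, $s=7$ to a primitive relation on $\Graver(A_{3\times 4})$ with coefficient sum $27$. The paper fills in the one piece you leave open by writing down the explicit seven-term relation (taken from \cite{Hemmecke+Nairn}, not \cite{Finhold+Hemmecke+Kahle}) with coefficients $1,3,5,2,3,6,7$, where $\vex^0$ is a column permutation of your candidate, $l=2$, and $h_0+s=1+7=3+5=h_1+h_2$.
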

\begin{proof}Let us take the following primitive relation $\sum\limits_{i=0}^6h_i\vex^i=\ve 0$ on $7$ elements $\vex^0,\ldots,\vex^6\in \Graver(A_{3\times 4})$ (see \cite{Hemmecke+Nairn}): 
{\footnotesize
\begin{align*}
 &1\cdot \left(\begin{array}{rrr} 0 & -1 & 1 \\ 1 & 0 & -1 \\ 0 & 0 & 0 \\  -1 & 1 & 0 \end{array} \right)
+ 3\cdot \left(\begin{array}{rrr} 0 & 0 & 0 \\ 0 & 1 & -1\\-1 & 0 & 1 \\ 1 & -1 & 0\end{array} \right)
+ 5\cdot \left(\begin{array}{rrr} -1 & 0 & 1 \\ 0 & 0 & 0 \\ 0 & 1 & -1 \\ 1 & -1 & 0 \end{array}\right)\\
+ &2\cdot \left(\begin{array}{rrr} -1 & 1 & 0 \\ 0 & 0 & 0 \\ 0 & -1 & 1 \\ 1 & 0 & -1\end{array} \right)
+ 3\cdot \left(\begin{array}{rrr} 0 & 0 & 0 \\ 0 & 1 & -1 \\ 1 & -1 & 0 \\ -1 & 0 & 1\end{array} \right)
+ 6\cdot \left(\begin{array}{rrr} 0 & 1 & -1 \\ 1 & -1 & 0 \\ 0 & 0 & 0 \\ -1 & 0 & 1 \end{array} \right)
+ 7\cdot \left(\begin{array}{rrr} 1 & -1 & 0 \\ -1 & 0 & 1 \\ 0 & 0 & 0 \\ 0 & 1 & -1 \end{array} \right)= 0.
\end{align*}
}
Applying Corollary \ref{Cor: Recursion} with $l=2$, $s=7$, $g=3$ gives $g\left(A_{3\times M}\right)\geq 2^{M-4}\cdot\left(27+\frac{21}{1}\right) -\frac{21}{1}= 24\cdot 2^{M-3}-21$.
\eoproof
\end{proof}

\section{Sample constructions}\label{Section: Examples}

Applying the construction in Theorem \ref{Thm: New general bound} recursively, we obtain the following primitive relations among elements in $\Graver(A_{3\times M})$, $M=5,6,7$.

\smallskip

$M=5$
{\footnotesize
\begin{align*}
 9\cdot &\left(\begin{array}{rrr} 0 & -1 & 1 \\ 1 & 0 & -1 \\ 0 & 0 & 0 \\ \textbf{0} & \textbf{0} & \textbf{0}\\  -1 & 1 & 0 \end{array} \right)
+ 6\cdot \left(\begin{array}{rrr} 0 & 0 & 0 \\ 0 & 1 & -1\\-1 & 0 & 1 \\\textbf{0} & \textbf{0} & \textbf{0}\\ 1 & -1 & 0\end{array} \right)
+ 10\cdot \left(\begin{array}{rrr} -1 & 0 & 1 \\ 0 & 0 & 0 \\ 0 & 1 & -1 \\ \textbf{0} & \textbf{0} & \textbf{0}\\ 1 & -1 & 0 \end{array} \right)\\
+ 4\cdot &\left(\begin{array}{rrr} -1 & 1 & 0 \\ 0 & 0 & 0 \\ 0 & -1 & 1 \\ 1 & 0 & -1\\\textbf{0} & \textbf{0} & \textbf{0}\end{array} \right)
+ 6\cdot \left(\begin{array}{rrr} 0 & 0 & 0 \\ 0 & 1 & -1 \\ 1 & -1 & 0 \\ -1 & 0 & 1\\ \textbf{0} & \textbf{0} & \textbf{0}\end{array} \right)
+ 12\cdot \left(\begin{array}{rrr} 0 & 1 & -1 \\ 1 & -1 & 0 \\ 0 & 0 & 0 \\ -1 & 0 & 1\\ \textbf{0} & \textbf{0} & \textbf{0} \end{array} \right)
+ 14\cdot \left(\begin{array}{rrr} 1 & -1 & 0 \\ -1 & 0 & 1 \\ 0 & 0 & 0 \\ 0 & 1 & -1\\ \textbf{0} & \textbf{0} & \textbf{0} \end{array} \right) \\
+ 7\cdot &\left(\begin{array}{rrr}  0 & 0 & 0 \\ -1 & 0 & 1 \\  0 & 0 & 0\\ 1 & -1 & 0 \\ 0 & 1 & -1 \end{array} \right)
+ 7\cdot \left(\begin{array}{rrr}  0 & 1 & -1 \\ 0 & 0 & 0  \\  0 & 0 & 0\\ 1 & -1 & 0 \\  -1 & 0 & 1\end{array} \right)
=0
\end{align*}
}
with $\sum_i\bar h_i=9+6+10+4+6+12+14+7+7=75$.

\smallskip

$M=6$
{\footnotesize
\begin{align*}
 25\cdot &\left(\begin{array}{rrr} 0 & -1 & 1 \\ 1 & 0 & -1 \\ 0 & 0 & 0 \\ 0 & 0 & 0\\\textbf{0} & \textbf{0} & \textbf{0}\\  -1 & 1 & 0 \end{array} \right)
+ 12\cdot \left(\begin{array}{rrr} 0 & 0 & 0 \\ 0 & 1 & -1\\-1 & 0 & 1 \\ 0 & 0 & 0\\ \textbf{0} & \textbf{0} & \textbf{0}\\ 1 & -1 & 0\end{array} \right)
+ 20\cdot \left(\begin{array}{rrr} -1 & 0 & 1 \\ 0 & 0 & 0 \\ 0 & 1 & -1 \\ 0 & 0 & 0\\ \textbf{0} & \textbf{0} & \textbf{0}\\ 1 & -1 & 0 \end{array} \right)\\
+ 8\cdot &\left(\begin{array}{rrr} -1 & 1 & 0 \\ 0 & 0 & 0 \\ 0 & -1 & 1 \\ 1 & 0 & -1\\ 0 & 0 & 0 \\\textbf{0} & \textbf{0} & \textbf{0}\end{array} \right)
+ 12\cdot \left(\begin{array}{rrr} 0 & 0 & 0 \\ 0 & 1 & -1 \\ 1 & -1 & 0 \\ -1 & 0 & 1\\ 0 & 0 & 0 \\ \textbf{0} & \textbf{0} & \textbf{0}\end{array} \right)
+ 24\cdot \left(\begin{array}{rrr} 0 & 1 & -1 \\ 1 & -1 & 0 \\ 0 & 0 & 0 \\ -1 & 0 & 1\\ 0 & 0 & 0 \\ \textbf{0} & \textbf{0} & \textbf{0} \end{array} \right)
+ 28\cdot \left(\begin{array}{rrr} 1 & -1 & 0 \\ -1 & 0 & 1 \\ 0 & 0 & 0 \\ 0 & 1 & -1\\ 0 & 0 & 0 \\ \textbf{0} & \textbf{0} & \textbf{0}\end{array} \right) \\
+ 14\cdot &\left(\begin{array}{rrr}  0 & 0 & 0 \\ -1 & 0 & 1 \\  0 & 0 & 0\\ 1 & -1 & 0 \\ 0 & 1 & -1\\ \textbf{0} & \textbf{0} & \textbf{0} \end{array} \right)
+ 14\cdot \left(\begin{array}{rrr}  0 & 1 & -1 \\ 0 & 0 & 0  \\  0 & 0 & 0\\ 1 & -1 & 0 \\  -1 & 0 & 1\\ \textbf{0} & \textbf{0} & \textbf{0}\end{array} \right) 
+  7\cdot \left(\begin{array}{rrr}  0 & 0 & 0 \\ -1 & 0 & 1 \\  0 & 0 & 0\\ 0 & 0 & 0\\ 1 & -1 & 0 \\ 0 & 1 & -1 \end{array} \right)
+ 7\cdot \left(\begin{array}{rrr}  0 & 1 & -1 \\ 0 & 0 & 0  \\  0 & 0 & 0\\ 0 & 0 & 0\\ 1 & -1 & 0 \\  -1 & 0 & 1\end{array} \right) 
=0
\end{align*}
}
with $\sum_i\bar h_i=25+12+20+8+12+24+28+14+14+7+7=171$.

\smallskip

$M=7$
{\footnotesize
\begin{align*}
 57\cdot &\left(\begin{array}{rrr} 0 & -1 & 1 \\ 1 & 0 & -1 \\ 0 & 0 & 0 \\ 0 & 0 & 0\\0 & 0 & 0\\ \textbf{0} & \textbf{0} & \textbf{0}\\  -1 & 1 & 0 \end{array} \right)
+ 24\cdot \left(\begin{array}{rrr} 0 & 0 & 0 \\ 0 & 1 & -1\\-1 & 0 & 1 \\ 0 & 0 & 0\\0 & 0 & 0\\ \textbf{0} & \textbf{0} & \textbf{0}\\ 1 & -1 & 0\end{array} \right)
+ 40\cdot \left(\begin{array}{rrr} -1 & 0 & 1 \\ 0 & 0 & 0 \\ 0 & 1 & -1 \\ 0 & 0 & 0\\0 & 0 & 0\\ \textbf{0} & \textbf{0} & \textbf{0}\\ 1 & -1 & 0 \end{array} \right)\\
+ 16\cdot &\left(\begin{array}{rrr} -1 & 1 & 0 \\ 0 & 0 & 0 \\ 0 & -1 & 1 \\ 1 & 0 & -1\\ 0 & 0 & 0 \\ 0 & 0 & 0\\\textbf{0} & \textbf{0} & \textbf{0}\end{array} \right)
+ 24\cdot \left(\begin{array}{rrr} 0 & 0 & 0 \\ 0 & 1 & -1 \\ 1 & -1 & 0 \\ -1 & 0 & 1\\ 0 & 0 & 0 \\ 0 & 0 & 0\\ \textbf{0} & \textbf{0} & \textbf{0}\end{array} \right)
+ 48\cdot \left(\begin{array}{rrr} 0 & 1 & -1 \\ 1 & -1 & 0 \\ 0 & 0 & 0 \\ -1 & 0 & 1\\ 0 & 0 & 0 \\ 0 & 0 & 0 \\ \textbf{0} & \textbf{0} & \textbf{0}\end{array} \right)
+ 56\cdot \left(\begin{array}{rrr} 1 & -1 & 0 \\ -1 & 0 & 1 \\ 0 & 0 & 0 \\ 0 & 1 & -1\\ 0 & 0 & 0 \\ 0 & 0 & 0\\\textbf{0} & \textbf{0} & \textbf{0}\end{array} \right) \\
+ 28\cdot &\left(\begin{array}{rrr}  0 & 0 & 0 \\ -1 & 0 & 1 \\  0 & 0 & 0\\ 1 & -1 & 0 \\ 0 & 1 & -1\\ 0 & 0 & 0 \\ \textbf{0} & \textbf{0} & \textbf{0}\end{array} \right)
+ 28\cdot \left(\begin{array}{rrr}  0 & 1 & -1 \\ 0 & 0 & 0  \\  0 & 0 & 0\\ 1 & -1 & 0 \\  -1 & 0 & 1\\ 0 & 0 & 0\\ \textbf{0} & \textbf{0} & \textbf{0}\end{array} \right) 
+ 14\cdot \left(\begin{array}{rrr}  0 & 0 & 0 \\ -1 & 0 & 1 \\  0 & 0 & 0\\ 0 & 0 & 0\\ 1 & -1 & 0 \\ 0 & 1 & -1\\ \textbf{0} & \textbf{0} & \textbf{0} \end{array} \right)
+ 14\cdot \left(\begin{array}{rrr}  0 & 1 & -1 \\ 0 & 0 & 0  \\  0 & 0 & 0\\ 0 & 0 & 0\\ 1 & -1 & 0 \\  -1 & 0 & 1\\\textbf{0} & \textbf{0} & \textbf{0}\end{array} \right) \\
+ 7\cdot &\left(\begin{array}{rrr}  0 & 0 & 0 \\ -1 & 0 & 1 \\  0 & 0 & 0\\ 0 & 0 & 0\\ 0 & 0 & 0\\ 1 & -1 & 0 \\ 0 & 1 & -1 \end{array} \right)
+ 7\cdot \left(\begin{array}{rrr}  0 & 1 & -1 \\ 0 & 0 & 0  \\  0 & 0 & 0\\ 0 & 0 & 0\\ 0 & 0 & 0\\ 1 & -1 & 0 \\  -1 & 0 & 1\end{array} \right)
=0
\end{align*}
}
with $\sum_i\bar h_i=57+24+40+16+24+48+56+28+28+14+14+7+7=363$.

However, if we read the relation for $M=6$ differently and choose $l=0$ and $s=-h_0=-25$, we obtain:

\smallskip

$M=7$
{\footnotesize
\begin{align*}
 25\cdot &\left(\begin{array}{rrr} 0 & -1 & 1 \\ 1 & 0 & -1 \\ 0 & 0 & 0 \\ 0 & 0 & 0\\ 0 & 0 & 0\\ \textbf{0} & \textbf{0} & \textbf{0}\\  -1 & 1 & 0 \end{array} \right)
+ 24\cdot \left(\begin{array}{rrr} 0 & 0 & 0 \\ 0 & 1 & -1\\-1 & 0 & 1 \\ 0 & 0 & 0\\ 0 & 0 & 0\\ 1 & -1 & 0\\ \textbf{0} & \textbf{0} & \textbf{0}\end{array} \right)
+ 40\cdot \left(\begin{array}{rrr} -1 & 0 & 1 \\ 0 & 0 & 0 \\ 0 & 1 & -1 \\ 0 & 0 & 0\\ 0 & 0 & 0\\ 1 & -1 & 0\\ \textbf{0} & \textbf{0} & \textbf{0} \end{array} \right)\\
+ 16\cdot &\left(\begin{array}{rrr} -1 & 1 & 0 \\ 0 & 0 & 0 \\ 0 & -1 & 1 \\ 1 & 0 & -1\\ 0 & 0 & 0 \\ 0 & 0 & 0\\ \textbf{0} & \textbf{0} & \textbf{0}\end{array} \right)
+ 24\cdot \left(\begin{array}{rrr} 0 & 0 & 0 \\ 0 & 1 & -1 \\ 1 & -1 & 0 \\ -1 & 0 & 1\\ 0 & 0 & 0 \\ 0 & 0 & 0\\ \textbf{0} & \textbf{0} & \textbf{0}\end{array} \right)
+ 48\cdot \left(\begin{array}{rrr} 0 & 1 & -1 \\ 1 & -1 & 0 \\ 0 & 0 & 0 \\ -1 & 0 & 1\\ 0 & 0 & 0 \\ 0 & 0 & 0 \\ \textbf{0} & \textbf{0} & \textbf{0}\end{array} \right)
+ 56\cdot \left(\begin{array}{rrr} 1 & -1 & 0 \\ -1 & 0 & 1 \\ 0 & 0 & 0 \\ 0 & 1 & -1\\ 0 & 0 & 0 \\ 0 & 0 & 0\\ \textbf{0} & \textbf{0} & \textbf{0}\end{array} \right) \\
+ 28\cdot &\left(\begin{array}{rrr}  0 & 0 & 0 \\ -1 & 0 & 1 \\  0 & 0 & 0\\ 1 & -1 & 0 \\ 0 & 1 & -1\\ 0 & 0 & 0\\ \textbf{0} & \textbf{0} & \textbf{0} \end{array} \right)
+ 28\cdot \left(\begin{array}{rrr}  0 & 1 & -1 \\ 0 & 0 & 0  \\  0 & 0 & 0\\ 1 & -1 & 0 \\  -1 & 0 & 1\\ 0 & 0 & 0\\ \textbf{0} & \textbf{0} & \textbf{0}\end{array} \right) 
+  14\cdot \left(\begin{array}{rrr}  0 & 0 & 0 \\ -1 & 0 & 1 \\  0 & 0 & 0\\ 0 & 0 & 0\\ 1 & -1 & 0 \\ 0 & 1 & -1 \\ \textbf{0} & \textbf{0} & \textbf{0}\end{array} \right)
+ 14\cdot \left(\begin{array}{rrr}  0 & 1 & -1 \\ 0 & 0 & 0  \\  0 & 0 & 0\\ 0 & 0 & 0\\ 1 & -1 & 0 \\  -1 & 0 & 1\\ \textbf{0} & \textbf{0} & \textbf{0}\end{array} \right) \\
+ 25\cdot &\left(\begin{array}{rrr}  0 & 0 & 0 \\ 1 & 0 & -1 \\  0 & 0 & 0\\ 0 & 0 & 0\\ 0 & 0 & 0\\ -1 & 1 & 0 \\ 0 & -1 & 1 \end{array} \right)
+ 25\cdot \left(\begin{array}{rrr}  0 & -1 & 1 \\ 0 & 0 & 0  \\  0 & 0 & 0\\ 0 & 0 & 0\\ 0 & 0 & 0\\ -1 & 1 & 0 \\  1 & 0 & -1\end{array} \right)
=0
\end{align*}
}
with $\sum_i\bar h_i=25+24+40+16+24+48+56+28+28+14+14+25+25=367>363$.

In fact, if one repeatedly applies Theorem \ref{Thm: New general bound} as in Corollary \ref{Cor: A3M bound} with $l=2$, $s=7$ up to some arbitrarily big $M_0$ and only then switches to different values $l=0$ and $s=-h_0$, we obtain the bound:
\[
	g(A_{3\times M})\geq\left(28-\frac{224}{2^{M_0}}\right)\cdot 2^{M-3}-\left(2^{M_0-1}-7\right), \text{ for } M\geq M_0.
\]
First note that a repeated application of Theorem \ref{Thm: New general bound} leads to a relation among elements in $\Graver(A_{3\times M_0})$ with $h_0=8\cdot 2^{M_0-4}-7$ and $\sum_i h_i=24\cdot 2^{M_0-3}-21$. Using this relation with $l=0$ and $s=-h_0<0$ (and $g=3$) in Corollary \ref{Cor: Recursion} we obtain
\begin{align*}
  g(A_{3\times M})&\geq 2^{M-M_0} \left(\sum_{i=0}^k h_i+\frac{-s}{1}\right)+\frac{s}{1}\\
  &=2^{M-M_0} ((24\cdot 2^{M_0-3}-21)+h_0)-h_0\\
  &=2^{M-M_0} ((24\cdot 2^{M_0-3}-21)+4\cdot 2^{M_0-3}-7)-2^{M_0-1}+7\\
  &=\left(28-\frac{224}{2^{M_0}}\right)\cdot 2^{M-3}-\left(2^{M_0-1}-7\right),
\end{align*}
for $M\geq M_0$, as claimed.

\bibliography{biblio}
\bibliographystyle{amsabbrvurl}

\end{document}